\newtheorem{corollary}{Corollary}
\newtheorem{proposition}{Proposition}
\theoremstyle{definition}
\newtheorem{definition}{Definition}
\newtheorem{remark}{Remark}
\newtheorem{example}{Example}
\newcommand{\R}        {\mathbb R}
\title{Lecture Notes: The Galerkin Method \footnote{Delivered as part of a summer reading course on Numerical Methods for Integral Equations at Simon Fraser University}}
\author[Raghavendra Venkatraman]{}
\email{raghav.16venkat@gmail.com}
\begin{document}

\maketitle

\centerline{\scshape Raghavendra Venkatraman \footnote{School of Mechanical Engineering, Indian Institute of Technology Roorkee, India. \\Present Address: Department of Mathematics, Simon Fraser University, Burnaby BC.}}

\medskip

In these notes, we consider the \textit{analysis} of Galerkin Method and its application to computing approximate solutions of integral equations. The emphasis is on Galerkin methods with an orthogonal basis. We introduce the Galerkin method in the framework of a Hilbert space. We give a computational example that illustrates the importance of choosing the right basis for the approximating finite dimensional subspaces. We then consider the solution of an integral equation whose exact solution is known, and present a sample matlab code to illustrate the success of the method. Finally, we give an interesting proof for how the Galerkin Method can be use to study the existence of solutions to a nonlinear boundary value problem based on its variational formulation. 

\section{Warming up: Some Analysis, Geometry and Hand Calculations} 
In this section, we motivate the Galerkin Method from analysis, and give some geometrical intuition for the case of symmetric problems. We subsequently pass on to a simple example, that illustrates the importance of choosing a good basis with the help of a numerical example. Most of the material of this section is based on the Numerical Analysis book by K. Atkinson \cite{atkinson}.\\
We begin by introducing a Hilbert Space $V,$ and a bilinear form $a(\cdot, \cdot)  : V \times V \to \R$ which is both bounded and $V-$elliptic, that is, 
\begin{definition}[Boundedness of Bilinear Form] The form $a(\cdot,\cdot)$ is bounded if there exists a positive number $M>0$ such that 
\[ |a(u,v)| \leq M\|u\|_V \|v\|_V \hspace{1cm} \forall u,v \in V, \]
\end{definition}
and 
\begin{definition}[V-Ellipticity] The form $a(\cdot,\cdot)$ is $V-$ elliptic provided there exists a constant $\alpha$ such that 
\[ a(v,v) \geq \alpha \|v\|_V^2 \hspace{1cm} \forall v \in V. \]
\end{definition}
Now, the basic problem of interest is the following: Given a functional $l \in V',$ (the dual space of $V,$) we look to find $u \in V$ for which 
\begin{equation} \label{prob1} a(u,v) = l(v) \hspace{1cm} \forall v \in V. \end{equation}
The Lax-Milgram Lemma \cite{kesavan, atkinson} gaurantees the existence of a unique solution $u$ to the problem \eqref{prob1}. In case of partial differential or integral equations, the space $V$ is infinite dimensional, hence it is rarely possible to find an exact solution to problem \eqref{prob1}. \\

In order to construct an approximate solution, it is natural to consider a finite dimensional approximation to \eqref{prob1}. For instance, we consider the finite dimensional subspace $V_N \subset V,$ an $N-$dimensional subspace of the space $V,$ and project the problem \eqref{prob1} onto $V_N,$ that is, we seek for 
\begin{equation} \label{prob2} u_N \in V_N, \hspace{1cm} a(u_N,v) = l(v), \hspace{1cm} \forall v \in V_N, \end{equation}
and hope that the resulting sequence of approximate solutions converges in some sense to the solution of the original problem.
Since $a(\cdot,\cdot)$ is bounded and $V-$ elliptic as before, Lax-Milgram grants the existence of a unique solution $u_N \in V_N$ for $l \in V'.$
We rewrite \eqref{prob2} as a linear system as follows. Supposing $\{ \phi_i\}_{i=1}^N$ be a basis of the finite dimensional subspace $V_N,$ we write, 
\[ u_N = \sum_{j=1}^N \xi_j \phi_j, \]
and take $v \in V_N$ to be the basis functions $\phi_i.$ Accordingly, we get the linear system
\begin{equation} A \xi = b, \end{equation}
where $\xi \in \R^N$ is the unknown vector, $[A]_{i,j} = [a(\phi_i,\phi_j)] \in \mathcal{M}^N(\R)$ (motivated from mechanics) is termed the \textit{stiffness matrix} and $b = (l(\phi_i)) \in \R^N$ the \textit{loading vector}. \\
The approximate solution $u_N$ differs from the exact solution $u,$ to increase the accuracy of the solution, we look for solutions in larger finite dimensional subspaces. Accordingly, corresponding to a sequence of subspaces $V_1 \subset V_2  \subset \cdots \subset V, \dim{V_j} < \infty, j = 1,2, \cdots ,$ we compute the approximate solutions $u_i \in V_i, i = 1,2,\cdots,$ this solution procedure generates the so-called Galerkin Method. We now consider the special case when $a(u,v)$ is symmetric, that is, 
\[ a(u,v) = a(v,u) \hspace{1cm} \forall u,v \in V.\]
In this case, it can be checked readily that the \eqref{prob1} is equivalent to the folowing minimization programme, 
\begin{equation} u \in V, E(u) = \inf_{v \in V} E(v), \hspace{1cm} E(v) = \frac{1}{2}a(v,v) - l(v). \end{equation}
Correspondingly, the approximate problem \eqref{prob2} can be viewed as a minimization programme over the finite dimensional subspace $V_N$ of $V,$
\begin{equation}
\mbox{ Seek } u_N \in V_N,  \hspace{1cm} E(u_N) = \inf_{v \in V_N} E(v). 
\end{equation}
Before proceeding, we consider the following example. 
\begin{example} Consider the two point boundary value problem, 
\begin{equation} 
 \begin{array}{l}
  -u'' = f  \mbox{ in } (0,1), \\
u(0) = u(1) = 0. 
\end{array}
\end{equation}
The weak formulation of the problem, is
\begin{equation}
\mbox{ Seek } u \in V, \hspace{1cm} \int_0^1 u'v'\,dx = \int_0^1 fv \,dx \hspace{1cm} \forall v \in V, 
\end{equation}
where $V = H^1_0(0,1).$ Accordingly, Lax-Milgram grants the existance of a unique solution. To develop a Galerkin method, we identify a finite dimensional subspace of basis functions that satisfy the boundary conditions at $x=0,1.$ A natural choice for the subspace is to consider one whose basis vectors satisfy the boundary conditions of the bvp: 
\[ V_N = \mbox{ span }\{x^i (1-x)| i = 1,2,\cdots N \}. \]
Approximating,
\[ u_N = \sum_{j=1}^N \xi_j x^j(1-x); \]
where the coefficients $\xi_j$ are determined by the Galerkin equations
\[ \int_0^1 u_N' v' \,dx = \int_0^1 fv \,dx \hspace{1cm} \forall v \in V_N. \]
Taking $v$ to be each of the basis functions $x^i(1-x), 1 \leq i \leq N,$ we obtain the following linear system of equations, 
\[A \xi = b, \]
where $\xi = (\xi_1, \xi_2, \cdots, \xi_N)^T$ is the vector of unknowns, $b \in \R^N$ is the vector whose $i$-th component is $\int_0^1 f(x)x^i(1-x)\,dx.$ The coefficient matrix $A,$ has $(i,j)$th entry given by, 
\begin{equation} \nonumber 
\int_0^1 [x^j(1-x)]'[x^i(1-x)]'\,dx = \frac{(i+1)(j+1)}{i+j+1} + \frac{(i+2)(j+2)}{i+j+3} - \frac{(i+1)(j+2) + (i+2)(j+1)}{i+j+2}. \end{equation}
The coefficient matrix is fairly ill-conditioned, and as a matter of fact, using the $\mathrm{cond(X,p)}$ command of matlab, it is fairly easy to calculate the condition numbers of the matrices $A,$ for instance in $2-$ norm, 

\begin{equation} \nonumber 
\begin{array}{cc}
N & Condition Number (A)\\
\hline
3 & 891.6637\\
4 & 2.4233e+04\\
5 & 6.5617e+05\\
6 & 1.7919e+07\\
7 & 4.9532e+08\\
8 & 1.3867e+10\\
9 & 3.9288e+11\\
10& 1.1282e+13\\
\hline
\end{array}
\end{equation}

We now work out the previous problem now using a different finite dimensional subspace, 
\[ V_N = span\{ \sin(i\pi x) | i = 1, \cdots , N\}.\]
The basis functions are \textit{orthogonal} with respect to the inner product defined by the bilinear form, 
\[
\int_0^1 (\sin j\pi x)' (\sin i \pi x)' \,dx = \frac{ij\pi^2}{2}\delta_{ij}. \]
Again, writing 
\[ u_N = \sum_{j=1}^N \xi_j \sin j\pi x, \]

the coefficients $\xi_j$ satisfy the following linear system, which is in fact \textit{diagonal}, hence we get the solution, 
\[ \xi_j = \frac{2}{\pi^2i^2} \int_0^1 f(x) \sin i \pi x\,dx,  \hspace{1cm} i = 1 \cdots N. 
\]
In fact, the Galerkin solution can be written in the form of a Kernel Approximation, 
\[ u_N = \int_0^1 f(t) K_N(x,t)\,dt, \]
where the kernel function 
\[ K_N(x,t) = \frac{2}{\pi^2} \sum_{i=1}^N \frac{\sin j\pi x \cdot \sin j \pi t}{j^2}. \]
\end{example}

\begin{remark} The above examples clearly illustrate that it is very important to choose appropriate basis functions to the finite dimensional subspaces. 
\end{remark}
\begin{remark} The Galerkin Method is not just a numerical scheme for approximating solutions to a differential or integral equations. By passing to the limit, we can even prove some existence results. This point will be illustrated via a simple nonlinear example towards the end of the lecture. 
\end{remark}
We now prove a result that serves as the basis for proving error estimates and convergence. 
\begin{proposition}[Cea Inequality] \label{prop1} Let $V$ a Hilbert Space, $V_N \subset V$ is a subspace, $a(\cdot, \cdot)$ is a bounded and $V-$elliptic bilinear form on $V,$ and $l \in V'.$ Let $u \in V$ be the solution of the problem \eqref{prob1}, and $u_N \in V_N$ be the Galerkin Approximation defined in \eqref{prob2}. Then 
\begin{equation} \|u - u_N\|_V \leq C \inf_{v \in V_N} \|u-v\|_V. \end{equation}
\end{proposition}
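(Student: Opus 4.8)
The plan is to exploit the defining equations of $u$ and $u_N$ together with the two structural hypotheses on $a(\cdot,\cdot)$—boundedness and $V$-ellipticity. The key observation is what is usually called \emph{Galerkin orthogonality}: since \eqref{prob1} holds for all $v \in V$ and \eqref{prob2} holds for all $v \in V_N \subset V$, subtracting the two gives
\[ a(u - u_N, v) = 0 \qquad \forall v \in V_N. \]
This says the error $u - u_N$ is orthogonal (with respect to the bilinear form) to the whole approximating subspace, which is exactly the ingredient that lets us compare $u_N$ against an \emph{arbitrary} competitor in $V_N$.

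First I would fix an arbitrary $v \in V_N$ and estimate $\|u - u_N\|_V^2$ from below using $V$-ellipticity, writing $\alpha \|u - u_N\|_V^2 \leq a(u - u_N, u - u_N)$. The idea is then to insert $v$ cleverly: because $u_N - v \in V_N$, Galerkin orthogonality lets me replace the second argument, obtaining
\[ a(u - u_N,\, u - u_N) = a(u - u_N,\, u - v), \]
since $a(u - u_N, u_N - v) = 0$. At this point boundedness finishes the job: $a(u - u_N, u - v) \leq M \|u - u_N\|_V \|u - v\|_V$. Combining the ellipticity lower bound with the boundedness upper bound and cancelling one factor of $\|u - u_N\|_V$ (handling the trivial case $u = u_N$ separately) yields $\|u - u_N\|_V \leq (M/\alpha)\|u - v\|_V$.

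Since $v \in V_N$ was arbitrary, I would then pass to the infimum over $v \in V_N$ on the right-hand side, giving the claimed inequality with the explicit constant $C = M/\alpha$. This identifies the constant concretely in terms of the boundedness and ellipticity constants, which is worth recording.

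Honestly, I do not expect a serious obstacle here: the result is essentially an algebraic consequence of the two hypotheses once Galerkin orthogonality is in hand, and no limiting or compactness argument is needed. The only point requiring mild care is the cancellation step—one must either note that the estimate is trivially true when $u = u_N$, or simply observe that the final inequality holds vacuously in that case—so that dividing by $\|u - u_N\|_V$ is legitimate.
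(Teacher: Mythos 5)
Your proposal is correct and follows exactly the paper's own argument: Galerkin orthogonality obtained by subtracting \eqref{prob1} and \eqref{prob2}, then the chain $\alpha\|u-u_N\|_V^2 \leq a(u-u_N,u-u_N) = a(u-u_N,u-v) \leq M\|u-u_N\|_V\|u-v\|_V$, followed by cancellation and taking the infimum. Your two additions --- making the constant $C = M/\alpha$ explicit and noting the trivial case $u = u_N$ before dividing --- are refinements the paper glosses over, but the route is the same.
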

\begin{proof} Subtracting \eqref{prob2} and \eqref{prob1} with $v \in V_N,$ we obtain the orthogonality relation, 
\begin{equation} a(u-u_N, v) = 0, \hspace{1cm} \forall v \in V_N. \end{equation}
Accordingly, using $V-$ ellipticity, and boundedness of $a,$  we have for any $v \in V_N,$
\begin{equation} 
\begin{array}{cc}
\alpha \|u-u_N\|_V^2 & \leq a(u-u_N, u-u_N)\\
& = a(u-u_N,u-v)\\
& \leq M\|u-u_N\|_V \|u-v\|_V
\end{array}
\end{equation}

Accordingly, 
\[ \|u - u_N\|_V \leq c\|u-v\|_V,\] 
since $v \in V_N$ is arbitrary, the inequality follows. 
\end{proof}

We have the following convergence result as a consequence of the Cea Inequality, 
\begin{corollary}[Convergence] Making the assumptions stated in \ref{prop1}, Assume $V_1 \subset V_2 \subset \cdots $ is a sequence of finite dimensional subspaces of $V$ with the property 
\begin{equation} 
\overline{\bigcup_{n\geq 1} V_n} = V. \end{equation}
Then the Galerkin method converges, 
\begin{equation} \|u-u_N\|_V \to 0 \hspace{1cm} \mbox{ as } n \to \infty, 
\end{equation}
where $u_N \in V_N$ is the Galerkin Solution. 
\end{corollary}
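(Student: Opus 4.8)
The plan is to reduce the entire statement to the Cea inequality (Proposition \ref{prop1}) combined with the density hypothesis, keeping the two ingredients cleanly separated. By Cea, for every $N$ we have
\[ \|u - u_N\|_V \leq C \inf_{v \in V_N} \|u - v\|_V, \]
so it suffices to show that the best-approximation error $d_N := \inf_{v \in V_N} \|u - v\|_V$ tends to $0$ as $N \to \infty$. In this way all of the stability of the method is already packaged into the constant $C$, and the remaining work is purely approximation-theoretic: I only need to demonstrate that the subspaces $V_N$ eventually approximate the fixed exact solution $u$ arbitrarily well.

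The key step is to exploit the density assumption $\overline{\bigcup_{n} V_n} = V$ together with the nesting $V_1 \subset V_2 \subset \cdots$. I would fix $\epsilon > 0$; since $u \in V = \overline{\bigcup_{n} V_n}$, there exists an element $w \in \bigcup_{n} V_n$ with $\|u - w\|_V < \epsilon$, and because the union is taken over the nested family, this $w$ lies in some single subspace $V_{n_0}$. The nesting is then decisive: for every $N \geq n_0$ we have $V_{n_0} \subset V_N$, so $w$ is an admissible competitor in the infimum defining $d_N$, which gives $d_N \leq \|u - w\|_V < \epsilon$ for all $N \geq n_0$. Since $\epsilon$ was arbitrary, $d_N \to 0$, and combining with Cea yields $\|u - u_N\|_V \leq C\,d_N \to 0$, as required.

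I expect no serious obstacle; the argument is a clean two-step combination, and the only point deserving explicit emphasis is that density by itself is insufficient. One genuinely needs the nesting to guarantee that a good approximant discovered in an early subspace remains available in all later ones — equivalently, that $d_N$ is monotone nonincreasing in $N$. Without this monotonicity one would only be able to extract a subsequence of the $d_N$ that is small, which would not suffice to establish convergence of the full sequence $u_N$ to $u$.
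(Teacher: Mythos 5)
Your proof is correct and follows exactly the route the paper intends: the paper states this corollary ``as a consequence of the Cea Inequality'' and omits the details, and your argument---Cea's bound reduces everything to $\inf_{v\in V_N}\|u-v\|_V \to 0$, which follows from density of $\bigcup_n V_n$ together with the nesting $V_{n_0}\subset V_N$ for $N\geq n_0$---is precisely that standard argument, carried out cleanly. Your closing observation that density alone does not suffice and that the nesting (equivalently, monotonicity of the best-approximation error) is genuinely needed is also accurate and a worthwhile point of emphasis.
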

\section{The Galerkin Method applied to Integral Equations}
We begin with the linear integral equation, 
\begin{equation} \label{2.1} 
\lambda u(x) - \int_\Omega k(x,y)u(y)\,dy = f(x), \hspace{1cm} x \in \Omega,
\end{equation}
where we usually seek solutions living in atleast a complete function space $V.$ 
If $u_n$ is an approximate solution living in a finite dimensional subspace of $V$, we define the residual in the approximation by 
\begin{definition}[Residual] 
\begin{equation} \label{2.2}
r_n \equiv (\lambda - K)u_n - f.\end{equation}
\end{definition}
Accordingly, we consider $V= L^2(\Omega),$ a Hilbert Space, and denote by $(\cdot, \cdot)$ its inner product. In the Galerkin methods, we require $r_n$ to satisfy
\begin{equation}  \label{2.3}
(r_n,  \phi_i) = 0, \hspace{1cm} i = 1, 2, \cdots \kappa_n,
\end{equation}
where $\kappa_n$ is the dimension of the approximating subspace of $V = L^2(\Omega).$
Notice that the left hand side of \eqref{2.3} is the Fourier coefficient $r_n$ associated with the basis functions $\phi_i.$ In particular, when $V$ is a separable Hilbert space, and $\{\phi_1, \cdots, \phi_\kappa\}$ consists of the leading members of the orthonormal family $\Phi= \{\phi_i\}_{i\geq1} $ which spans $V.$ This implies that \eqref{2.3} demands that the leading terms in the Fourier expansion of $r_n$ with respect to $\Phi$ vanish. 
To find $u_n,$ we use the above approximation in conjunction with the equation $(\lambda - K)u = f. $ We obtain the linear system 
\begin{equation}
\sum_{j=1}^{\kappa_n} c_j[ \lambda(\phi_j,\phi_i) - (K\phi_j,\phi_i)] = (f,\phi_i), \hspace{1cm} i = 1, \cdots, \kappa_n. 
\end{equation}
This so-called Galerkin method with orthogonal basis raises an important question: When does the resulting sequence of approximate solutions $u_n \in V_n$ converge to $u$ in $V?$
We postpone the proof of this question to a specific nonlinear boundary value problem to the next section. For the moment, we consider some numerical examples, to make it evident that the method indeed works!
\subsection{A Galerkin Method with Trigonometric Polynomials}
We consider the problem of solving the integral equation
\begin{equation} \label{eg2.11}
\lambda u(x) - \int_0^{2\pi} k(x,y)u(y)\,dy = f(x), \hspace{1cm} 0 \leq x \leq 2\pi, \end{equation}
with $k(x,y)$ and $f(x)$ being $2\pi-$ periodic functions. We work within the framework of the Hilbert space $V = L^2(0,2\pi).$ The inner product is given by,
\[ (u,v) = \int_0^{2\pi} u(x)\overline{v(x)}\,dx. \]
We use as basis the functions 
\[ \phi_j(x) = e^{\iota jx}, \hspace{1cm} j = 0, \pm 1, \pm 2, \cdots, \pm n. \]
The orthogonal projection of $L^2(0,2\pi)$ onto $V_n$ is just the $n-$th partial sum of the series, 
\begin{equation} P_n u(x) = \frac{1}{2\pi} \sum_{j=-n}^n (u,\phi_j)\phi_j(x). \end{equation}
With respect to the basis $\phi_j$ defined above, the linear system for 
\[ (\lambda - P_nK)u_n = P_nf, \]
is given by
\begin{equation}
2\pi\lambda c_k - \sum_{j=-n}^n c_j \int_0^{2\pi} \int_0^{2\pi} e^{\iota jy -\kappa x}k(x,y) \,dy \,dx = \int_0^{2\pi} e^{-\iota \kappa x} f(x)\,dx, \hspace{1cm} \kappa = -n \cdots n. 
\end{equation} 
It can be checked that the solution $u_n$ is given by
\[ u_n(x) = \sum_{j=-n}^n c_je^{\iota j x}. \]
The integrals above are evaluated numerically, as the following numerical example shows, with slightly different orthonormal basis functions, namely orthonormal box functions. 
\begin{example} We consider the first kind Fredhom Equation (the problem has been borrowed from p.109 of \cite{Wing})
\begin{equation}g(x) = \int_0^1 ye^{-xy^2}f(y)\,dy \end{equation}
where $f$ is the unknown function, and $g(x)$ depends on the two parameters $y_1,y_2 ; y_1 <y_2,$ is given as
\begin{equation}
g(x) = \left\{ \begin{array}{cc} \frac{1}{2x}[e^{-xy_1^2}- e^{-xy_2^2}], & x > 0\\
\frac{1}{2}(y_2-y_1), & x = 0. \end{array}\right. 
\end{equation}
By direct substitution, it is easy to check that the solution is given by
\begin{equation}
f(y) = \left\{\begin{array}{cc} 1, & y_1 < y < y_2\\
0, & \mbox{ all other } y. 
\end{array}
\right.
\end{equation}
We provide a simple matlab code to obtain a Galerkin Approximation to the solution:
\begin{verbatim}
function [A,b,x] = wing(n,t1,t2)
% WING Test problem with a discontinuous solution.
%
% [A,b,x] = wing(n,t1,t2)
%
% Discretization of a first kind Fredholm integral eqaution with
% kernel K and right-hand side g given by
%    K(s,t) = t*exp(-s*t^2)                       0 < s,t < 1
%    g(s)   = (exp(-s*t1^2) - exp(-s*t2^2)/(2*s)  0 < s   < 1
% and with the solution f given by
%    f(t) = | 1  for  t1 < t < t2
%           | 0  elsewhere.
%
% Here, t1 and t2 are constants satisfying t1 < t2.  If they are
% not speficied, the values t1 = 1/3 and t2 = 2/3 are used.

% Reference: G. M. Wing, "A Primer on Integral Equations of the
% First Kind", SIAM, 1991; p. 109.

% Discretized by Galerkin method with orthonormal box functions;
% both integrations are done by the midpoint rule.


% Initialization.
if (nargin==1)
  t1 = 1/3; t2 = 2/3;
else
  if (t1 > t2), error('t1 must be smaller than t2'), end
end
A = zeros(n,n); h = 1/n;

% Set up matrix.
sti = ((1:n)-0.5)*h;
for i=1:n
  A(i,:) = h*sti.*exp(-sti(i)*sti.^2);
end

% Set up right-hand side.
if (nargout > 1)
  b = sqrt(h)*0.5*(exp(-sti*t1^2)' - exp(-sti*t2^2)')./sti';
end

% Set up solution.
if (nargout==3)
  I = find(t1 < sti & sti < t2);
  x = zeros(n,1); x(I) = sqrt(h)*ones(length(I),1);
end
\end{verbatim}
\end{example}
\section{A Nonlinear Example}
In this section, we wish to approximate the solutions to the nonlinear bvp, pass to the limit and discuss the existence of its solutions. Let $\Omega \subset \R^n, n \leq 4$ be a bounded open set. We need to discuss existence of weak solutions $u \in H^1_0(\Omega)$ of the problem, 
\begin{equation} \label{3.1}
\begin{array}{l}
-\Delta u - \lambda u + u^3 = f, \hspace{1cm} \mbox{in  } \Omega, \\
u = 0, \hspace{1cm} \mbox{ on } \Gamma. 
\end{array}
\end{equation}
where $f \in L^2(\Omega)$ and $\lambda \in \R.$ Notice that if $u \in H^1(\Omega),$ by Sobolev Inclusion Theorem, $u \in L^4(\Omega),$ and 
\[ \big| \int_\Omega u^3v \big| \leq |u|^3_{0,4,\Omega}|v|_{0,4,\Omega} \leq C \|u\|^3_{1,\Omega}\|v\|_{1,\Omega}. \]
for $ v \in H^1_0(\Omega).$ Hence $u^3 \in H^{-1}(\Omega).$
Accordingly, the weak formulation of the problem \eqref{3.1} will be to find $u \in H^1_0(\Omega)$ such that
\begin{equation} \label{3.2}
\int_\Omega \nabla u \cdot \nabla v - \lambda \int_\Omega uv + \int_\Omega u^3 v = \int_\Omega fv,
\end{equation}
for every $v \in H^1_0(\Omega).$ 
Since $H^1_0(\Omega)$ is a separable Hilbert Space, let $\{w_1,w_2, \cdots \}$ be an orthonormal basis for this space. Define
\[ W_m = span\{w_1, w_2, \cdots w_m\}.\]
We now proceed in steps. \\
\textit{Step 1:} We look for $u_m \in W_m$ such that 
\begin{equation}
\label{gal}
\int_\Omega \nabla u_m \cdot \nabla v - \lambda \int_\Omega u_mv + \int_\Omega u_m^3 v = \int_\Omega fv, \hspace{1cm} v \in W_m.
\end{equation}
It suffices now if \eqref{gal} is verified for $v = w_i, 1 \leq i \leq m.$ Let $\xi \in \R^m,$ to each such $\xi$ we associate a unique $v \in W_m$ by the map
\begin{equation} \label{3.4}
v = \sum_{i=1}^m \xi_i w_i. \end{equation}
This map is a linear bijection between $\R^m$ and $W_m,$ and more importantly, since the basis $\{w_i\}$ are orthonormal in $H^1_0(\Omega)$ 
\begin{equation}\label{star} |v|_{1,\Omega}^2 = |\xi|^2. \end{equation}
Remark that $H^1_0(\Omega)$ is equipped with the norm $|\cdot |_{1,\Omega}.$ Define now, $F : \R^m \to \R^m, $ by
\[ (F(\xi))_i = \int_\Omega \nabla v \cdot \nabla w_i - \lambda \int_\Omega vw_i + \int_\Omega v^3 w_i - \int_\Omega fw_i,\]
where $v$ is as given in \eqref{3.4}. Now \eqref{gal} has a solution if there exists a $\xi$ such that $F(\xi) = 0. $ Now, 
\begin{equation} \begin{array}{l}
(F(\xi),\xi) = \sum_{i=1}^m (F(\xi))_i \xi_i, \\
\phantom{(F(\xi),\xi)} = |v|_{1,\Omega}^2 - \lambda |v|_{0,\Omega}^2 + \int_\Omega v^4 - \int_\Omega fv\\
\phantom{(F(\xi),\xi)} \geq |v|_{1,\Omega}^2 - \lambda |v|_{0,\Omega}^2 - |f|_{0,\Omega}|v|_{0,\Omega}. 
\end{array}
\end{equation}
At this stage, in order to proceed, we need to make some restriction on $\lambda.$ Now from a course on PDE and functional analysis \cite{kesavan}, we recall that the eigenvalues of the Dirichlet Problem of the Laplace operator are characterized in terms of the Rayleigh Quotient 
\[ R(v) = \frac{|v|_{1,\Omega}^2}{|v|_{0,\Omega}^2}, \]
In particular, 
\[ |v|^2_{0,\Omega} \leq \frac{1}{\lambda_1}|v|_{1,\Omega}^2. \]
Hence we have, 
\begin{equation}
(F(\xi),\xi) \geq \left(1 - \frac{\lambda}{\lambda_1}\right) |\xi|^2 - \frac{|f|_{0,\Omega}}{\sqrt{\lambda_1}}|\xi|.
\end{equation}
Provided that $\lambda < \lambda_1, $  we can choose $|\xi| = R$ large enough so that 
\[ (F(\xi),\xi) \geq 0 \hspace{1cm} \mbox{ for } |\xi| = R. \]
Hence, by Brouwers Fixed Point Theorem, $\exists \xi^m$ such that, 
\[ |\xi^m | \leq R, \hspace{1cm} F(\xi^m) = 0. \]
At this stage, we set $u_m = \sum_{i=1}^m \xi_i^m w_i$ and $u_m \in W_m$  will be a solution to \eqref{3.3}. Furthermore, \eqref{star} implies that 
\[ |u_m|_{1,\Omega} \leq R, \] 
with $R$ depending only on $\lambda, f.$\\
\textit{Step 2:} Since $\{u_m\}$ is uniformly bounded in $H^1_0(\Omega),$ Arzela Ascoli allows us to extract a weakly convergent subsequence which we continue to denote by $\{u_m\}.$ Let 
\[ u_m \to u \hspace{1cm} \mbox{ in } H^1_0(\Omega) \mbox{ weakly }. \]\\
\textit{Step 3:} As usual, we define the space of infinitely smooth functions $C_0^\infty (\Omega ) = \mathcal{D}(\Omega).$ Let $v \in \mathcal{D}(\Omega).$ Then there exits $v_m \in W_m$ such that $v_m \to v$ strongly in $H^1_0(\Omega).$ In fact, we can choose
\[ v_m = \sum_{i=1}^m \left( \int_\Omega \nabla v \cdot \nabla w_i\right)w_i. \]
Then, by \eqref{gal}
\begin{equation}
\label{3.3}
\int_\Omega \nabla u_m \cdot \nabla v_m - \lambda \int_\Omega u_mv_m + \int_\Omega u_m^3 v_m = \int_\Omega fv_m.
\end{equation}
Now we pass to the limit, firstly for the linear terms. It is easy that $u_m \to u$ weakly in $H^1_0(\Omega)$ so strongly in $L^2(\Omega),$ $v_m \to v$ strongly in $H^1_0(\Omega)$ and $L^2(\Omega). $ Thus, 
\begin{equation} \begin{array}{l}
\lim_{m\to \infty} \int_\Omega \nabla u_m \cdot \nabla v_m = \int_\Omega \nabla u \cdot \nabla v, \\
\lim_{m\to \infty} \int_\Omega u_m v_m= \int_\Omega uv, \\
\lim_{m\to \infty} \int_\Omega fv_m = \int_\Omega fv. \\
\end{array}
\end{equation}
\textit{Step 4:} Finally we pass to the limit in the nonlinear term, 
\begin{equation} 
\int_\Omega u_m^3v_m - \int_\Omega u^3v = \int_\Omega u_m^3(v_m-v) + \int_\Omega (u_m^3 - u^3)v, = A + B \end{equation}
\begin{equation} \begin{array}{l}
|A|= 
\big|\int_\Omega u_m^3(v_m-v)\big| \leq |u_m|^3_{0,4,\Omega} |v_m-v|_{0,4,\Omega}, \\
\phantom{\big|\int_\Omega u_m^3(v_m-v)\big|} \leq C |u_m|^3_{1,\Omega} |v_m -v|_{1,\Omega}\\
\phantom{\big|\int_\Omega u_m^3(v_m-v)\big|} \leq CR^3 |v_m -v|_{1,\Omega}.\\ \\

|B|= \big|\int_\Omega (u_m^3 - u^3)v\big| = \big|\int_\Omega (u_m-u)(u_m^2 + u_mu + u^2)v \big| \\
\phantom{\big|\int_\Omega (u_m^3 - u^3)v\big| } \leq |v|_{0,\infty,\Omega} |u_m-u|_{0,\Omega}|u_m^2 + u_mu + u^2|_{0,\Omega}\\
\phantom{\big|\int_\Omega (u_m^3 - u^3)v\big| } \leq C|u_m - u|_{0,\Omega}. 

\end{array}
\end{equation}
both of which separately go to $0$ as $m \to \infty.$
Thus, for every $v \in \mathcal{D}(\Omega)$
\begin{equation} 
\int_\Omega \nabla u \cdot \nabla v - \lambda \int_\Omega uv + \int_\Omega u^3 v = \int_\Omega fv,
\end{equation}
and since $\mathcal{D}(\Omega)$ is dense in $H^1_0(\Omega)$, and the above expression is linear in $v$ we complete our discussion on existence of solutions.

\end{document}